\newcommand{\mathsym}[1]{{}}
\newcommand{\thmref}[1]{Theorem~\ref{#1}}
\newcommand{\propref}[1]{Proposition~\ref{#1}}
\newcommand{\lemref}[1]{Lemma~\ref{#1}}
\newcommand{\eqnref}[1]{Equation~(\ref{#1})}
\newcommand{\remref}[1]{Remark~\ref{#1}}
\newcommand{\figref}[1]{Figure~\ref{#1}}
\newcommand{\tabref}[1]{Table~\ref{#1}}
\def\li{L_{i}}
\def\ri{R_{i}}
\def\NN{{\mathbb N}}
\def\XX{{\mathcal X}}
\def\SS{{\mathcal S}}
\newtheorem{theorem}{Theorem}[section]
\newtheorem{lemma}[theorem]{Lemma}
\newtheorem{proposition}[theorem]{Proposition}
\theoremstyle{example}
\newtheorem{remark}[theorem]{Remark}
\theoremstyle{definition}
\theoremstyle{notation}
\newcommand{\dd}[1]{\delta_{#1}}
\newcommand{\nn}[1]{\mu_{#1}}
\newcommand{\ga}{\Gamma}
\newcommand{\tg}{\tau(\Gamma)}
\newcommand{\ee}[1]{E(#1)}
\newcommand{\vv}[1]{V(#1)}
\newcommand{\va}{\upsilon}
\newcommand{\vb}{\text{v} \hspace{0.5 mm}}
\newcommand{\gc}{g(C)}
\newcommand{\tc}{\theta}
\newcommand{\pp}{p_{i}}
\newcommand{\qq}{q_{i}}
\newcommand{\cO}{\mathcal{O}}
\def\can{{\mathop{\rm can}}}
\def\<{\langle }
\def\>{\rangle }
\newcommand{\secref}[1]{\S\ref{#1}}
\def\ed{\epsilon_{D}}
\def\diag{\text{diag}}
\newcommand{\am}{\mathrm{A}}
\newcommand{\dm}{\mathrm{D}}
\newcommand{\jm}{\mathrm{J}}
\newcommand{\lm}{\mathrm{L}}
\newcommand{\plm}{\mathrm{L^+}}
\newcommand{\lpq}{l_{pq}}
\newcommand{\plpq}{l_{pq}^+}
\def\elg{\ell (\ga)}
\def\gc{\bar{g}}
\def\ed{\epsilon(\ga)}
\def\bq{\textbf{q}}
\def\vg{\varphi (\ga)}
\def\elg{\ell (\ga)}
\newcommand{\tcg}{\theta (\ga)}
\def\lag{\lambda (\ga)}
\newcommand\T{\rule{0pt}{2.6ex}}
\newcommand\B{\rule[-1.2ex]{0pt}{0ex}}
\begin{document}

\title[Computation of Polarized Metrized Graph Invariants]
{Computation of Polarized Metrized Graph Invariants By Using Discrete Laplacian Matrix}

\author{Zubeyir Cinkir}
\address{Zubeyir Cinkir\\
Department of Mathematics\\
Zirve University\\
27260, Gaziantep, TURKEY\\}
\email{zubeyir.cinkir@zirve.edu.tr}


\keywords{Metrized graph, polarized metrized graph, invariants of polarized metrized graphs, the tau constant,
resistance function, the discrete Laplacian matrix, pseudo inverse and relative dualizing sheaf}

\begin{abstract}
Several invariants of polarized metrized graphs and their applications in Arithmetic Geometry are studied recently.
In this paper, we give fast algorithms to compute these invariants by expressing them in terms of the discrete Laplacian matrix and its pseudo inverse. Algorithms we give can be used for both symbolic and numerical computations. We present various examples to illustrate the implementation of these algorithms.
\end{abstract}

\maketitle

\section{Introduction}\label{sec introduction}

Let $X$ be a geometrically connected curve of genus $\gc \geq 2$ over a field $k$. Suppose $k$ is either a number field or the function field of a smooth projective curve $Y$ over a field. Assume that $X$ has a semistable model $\XX$ over $\SS$, where $\SS=Spec \cO_k$ if $k$ is a number field and $\SS=Y$ if $k$ is a function field. Let $N(v)$ be the local factor related to the product formula for $k$. In this context, $\omega_{\XX/\SS}^2$, the self intersection of the relative dualizing sheaf $\omega_{\XX/\SS}$, is an important quantity both in geometric and arithmetic case. Lower and upper bounds to this quantity are of interest in diophantine geometry.

In 1993, Zhang \cite{Zh1} expressed $\omega_{\XX/\SS}^2$ in terms of $\omega_{a}^2$, the self intersection of the admissible relative dualizing sheaf $\omega_{a}$ associated to a minimal regular model of $X$:
\begin{equation}\label{eqn zhang id1}
\begin{split}
\omega_{\XX/\SS}^2=\omega_a^2+\sum_{v}\epsilon(X_v) \log N(v),
\end{split}
\end{equation}
where $v$ runs over the set of non-archimedean places of $k$, and $\epsilon(X_v)$ is a certain local invariant
of the reduction graph $R(X_v)$ associated to the completion of $X$ at a place $v$ of $k$.

In 2010, Zhang \cite[Corollary 1.3.2]{Zh2} expressed $\omega_{a}^2$ in terms of $\langle  \Delta_{\xi}, \Delta_{\xi} \rangle$,
the arithmetic self intersection (equal to the canonical height) of the Gross-Schoen cycle $\Delta_{\xi} \subset X \times X \times X$:
\begin{equation}\label{eqn zhang id2}
\begin{split}
\omega_a^2=\frac{2 \gc-2}{2 \gc+1} \langle  \Delta_{\xi}, \Delta_{\xi} \rangle +\frac{2 \gc-2}{2 \gc+1} \sum_{v}\varphi (X_{v}) \log N(v),
\end{split}
\end{equation}
where $v$ runs over the set of places of $k$, and $\varphi(X_{v})$ is a certain local invariant again linked to the reduction graph $R(X_v)$
at each place $v$ of $k$.

On the other hand, we have the following equality for a semistable fibration $f: \XX \longrightarrow \SS$ by Nother's formula:
\begin{equation}\label{eqn Noether's formula}
\begin{split}
\omega_{\XX/\SS}^2=12 \deg f_*(\omega_{\XX/\SS})-\sum_{v} \delta(X_v),
\end{split}
\end{equation}
where $\delta(X_v)$ is the total number of the singular points in the fiber over $v$. Moreover, Zhang \cite{Zh2} showed that
\begin{equation}\label{eqn lambda and Schoen cycle}
\begin{split}
\deg f_*(\omega_{\XX/\SS}) =\frac{\gc-1}{6(2 \gc+1)} \langle  \Delta_{\xi}, \Delta_{\xi} \rangle + \sum_{v} \lambda(X_{v}) \log N(v),
\end{split}
\end{equation}
where $v$ runs over the set of places of $k$, and $\lambda(X_{v})$ is another local invariant associated to the reduction graphs $R(X_v)$.
We refer \cite{J2} for other connections between $\lambda(X_{v})$ and some invariants of complex moduli space of curves of genus higher than $1$.

Whenever $v$ is a non-archimedean place, the local invariants $\varphi(X_{v})$, $\epsilon(X_v)$, $\lambda(X_{v})$ are defined as the invariants of the corresponding polarized metrized graph. Note that the reduction graph $R(X)$ of any semistable curve $X$ of genus $\gc$ over a discrete valuation ring is a polarized metrized graph of genus $\gc$.

We also have invariants $Z(\ga)$, $\tg$ and $\tcg$ (see \secref{sec pmg and inv} below) of a polarized metrized graph $\ga$, which are closely related to the local invariants given above.

Explicit computations of these local invariants were done only for some curves of genus less than or equal to $4$ (see \cite{AM2}, \cite{AM3}, \cite{Fa}, \cite{KY1}, \cite{KY2}, \cite{KY3}, \cite{J}, \cite{C1}).
Other than some families of polarized metrized graph of certain types, explicit computations become a huge intricate task for a person as soon as $\gc$ gets larger than 3. Thus, one needs a computer algorithm to do such computations. 
We have such an algorithm to compute $\tg$ (see \cite{C1} and \cite{C6}). Also, X. Faber gave an algorithm to compute 
the invariants $\ed$, $\vg$, $\lag$ and $Z(\ga)$ (see \cite{Fa} and \cite{Fa2}).

In this paper, we provide a fast computer algorithm that can be used for both symbolic and numeric computation of each of the polarized metrized graph invariants.
This algorithm is faster, because it takes advantage of two important facts: All of the effective resistance computations can be handled by just one matrix inversion (see \eqnref{eqn pseudo inverse} and \lemref{lem disc2}), and the computations of the invariants $\ed$, $\vg$, $\lag$ and $Z(\ga)$ can be reduced to the computations of $\tg$ and $\tcg$ (see \thmref{thm pmginv and tau}).

In \secref{sec pmg and inv}, we give a short revision of a polarized metrized graph and definitions of its invariants (see \lemref{lemtauformula}, \eqnref{eqn tcg} and \eqnref{eqn app3}). Using \thmref{thm pmginv and tau}, the problem of computing the invariants $\ed$, $\vg$, $\lag$ and $Z(\ga)$ are reduced to the computations of $\tg$ and $\tcg$. Using these connections and our previous results about $\tg$ and $\tcg$, we explain how to deal with self loops and multiple edges, if desired, as the initial step of the algorithm.

In \secref{sec discrete}, we define discrete Laplacian matrices associated to polarized metrized graphs. We express $\tcg$ in terms of the discrete Laplacian matrix $\lm$ and its pseudo inverse $\plm$ (see \thmref{thm tcg}). Then we use our previous result on $\tg$ to express
the invariants $\ed$, $\vg$, $\lag$ and $Z(\ga)$ in terms of the entries of $\lm$ and $\plm$. This gives us the main result in this paper. Namely, each of these invariants can be symbolically or numerically computed with the algorithm we provide.

Finally, we give various examples to show the implementation of our algorithm in \secref{sec examples}.

\section{Polarized Metrized graphs and their invariants}\label{sec pmg and inv}

In this section, we first give brief descriptions of a metrized graph $\ga$, a polarized metrized graph $(\ga,\bq)$, invariants $\tg$, $\tcg$, $\ed$, $\vg$, $\lag$ and $Z(\ga)$ associated to $(\ga,\bq)$.

A metrized graph $\ga$ is a finite connected graph equipped with a distinguished parametrization of each of its edges.
A metrized graph $\ga$ can have multiple edges and self-loops.
For any given $p \in \ga$,
the number $\va(p)$ of directions emanating from $p$ will be called the \textit{valence} of $p$.
By definition, there can be only finitely many $p \in \ga$ with $\va(p)\not=2$.

For a metrized graph $\ga$, we will denote a vertex set for $\ga$ by $\vv{\ga}$.
We require that $\vv{\ga}$ be finite and non-empty and that $p \in \vv{\ga}$ for each $p \in \ga$ if $\va(p)\not=2$. For a given metrized graph $\ga$, it is possible to enlarge the
vertex set $\vv{\ga}$ by considering additional valence $2$ points as vertices.

For a given metrized graph $\ga$ with vertex set $\vv{\ga}$, the set of edges of $\ga$ is the set of closed line segments with end points in $\vv{\ga}$. We will denote the set of edges of $\ga$ by $\ee{\ga}$. However, if
$e_i$ is an edge, by $\ga-e_i$ we mean the graph obtained by deleting the {\em interior} of $e_i$.

We define the genus of $\ga$ to be the first Betti number $g(\ga):=e-v+1$ of the graph $\ga$, where $e$ and $v$ are the number of edges and vertices of $\ga$, respectively.

We denote the length of an edge $e_i \in \ee{\ga}$ by $\li$, which represents a positive real number. The total length of $\ga$, which is denoted by $\elg$, is given by $\elg=\sum_{i=1}^e\li$.

If a metrized graph $\Gamma$ is viewed as a
resistive electric circuit with terminals at $x$ and $y$, with the
resistance in each edge given by its length, then $r(x,y)$ is
the effective resistance between $x$ and $y$ when unit current enters
at $y$ and exits at $x$.

For any $x$, $y$ in $\ga$, the resistance function $r(x,y)$ on
$\ga$ is a symmetric function in $x$ and $y$, and it satisfies
$r(x,x)=0$.
For each vertex set $\vv{\ga}$, $r(x,y)$ is
continuous on $\ga$ as a function of two variables and
$r(x,y) \geq 0$ for all $x$, $y$ in $\ga$.
For proofs of these facts and connections between the resistance function and the voltage function on $\ga$, see articles \cite{CR}, \cite[sec 1.5 and sec 6]{BRh}, and \cite[Appendix]{Zh1}.
The resistance function $r(x,y)$ on a metrized graph
were also studied in the articles \cite{BF} and \cite{C2}.

We will denote by $R_{i}(\ga)$, or by $R_i$ if there is no danger of confusion, the resistance between the end points of an edge $e_i$ of a graph $\ga$ when the interior of the edge $e_i$ is deleted from $\ga$.

The tau constant $\tg$ of a metrized graph $\ga$ was initially defined by Baker and Rumely \cite[Section 14]{BRh}.
The following lemma gives a description of the tau constant. In particular, it implies that the tau constant is positive.
\begin{lemma}\cite[Lemma 14.4]{BRh}\label{lemtauformula}
For any fixed $y$ in $\ga$,
$\tg =\frac{1}{4}\int_{\ga}\big(\frac{\partial}{\partial x} r(x,y) \big)^2dx$.
\end{lemma}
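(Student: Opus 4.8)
The plan is to reduce the assertion to the Laplacian identity satisfied by the resistance function, together with Green's formula on a metrized graph. Recall from \cite{Zh1} and \cite{BRh} the \emph{canonical measure} $\mucan$ of $\ga$: it is the probability measure on $\ga$ determined by the fundamental relation
\begin{equation*}
\mucan \;=\; \delta_y + \frac12\,\Delta_x\, r(x,y),\qquad\text{for every }y\in\ga,
\end{equation*}
equivalently $\Delta_x\, r(x,y) = 2\mucan - 2\delta_y$ as measures in $x$; the left-hand side in particular makes it evident that this expression does not depend on $y$. Throughout, $\Delta$ denotes the measure-valued Laplacian on $\ga$, normalized so that Green's identity $\int_{\ga} f\,\Delta g = \int_{\ga}\frac{\partial f}{\partial x}\frac{\partial g}{\partial x}\,dx$ holds for continuous, piecewise-$C^1$ functions $f,g$; in particular $\int_{\ga} f\,\Delta f = \int_{\ga}\big(\frac{\partial f}{\partial x}\big)^2 dx \ge 0$. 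Besides these two facts I would use only the symmetry $r(x,y)=r(y,x)$, the normalization $r(y,y)=0$, and the continuity of $r$ recalled above.

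First I would evaluate the integral for a fixed $y$. Enlarging $\vv{\ga}$, if desired, so that $y\in\vv{\ga}$, the function $x\mapsto r(x,y)$ is continuous and piecewise-$C^1$ on $\ga$, so Green's identity applies to it. Taking $f=g=r(\cdot,y)$ and inserting the relation above,
\begin{equation*}
\int_{\ga}\Big(\frac{\partial}{\partial x}\,r(x,y)\Big)^2 dx
\;=\; \int_{\ga} r(x,y)\,\Delta_x\, r(x,y)
\;=\; 2\int_{\ga} r(x,y)\,d\mucan(x) - 2\,r(y,y)
\;=\; 2\int_{\ga} r(x,y)\,d\mucan(x),
\end{equation*}
the last equality using $r(y,y)=0$. (Had $y$ been left as an ordinary point of $\ga$, the only change would be an additional boundary term at $x=y$, again a multiple of $r(y,y)=0$.) Hence $\frac14\int_{\ga}\big(\frac{\partial}{\partial x}r(x,y)\big)^2 dx = \frac12\int_{\ga} r(x,y)\,d\mucan(x)$.

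Next I would show the right-hand side does not depend on $y$. Put $h(y):=\int_{\ga} r(x,y)\,d\mucan(x)$. Interchanging $\Delta_y$ with the integral and applying the Laplacian relation in the variable $y$ (legitimate because $r$ is symmetric),
\begin{equation*}
\Delta_y\, h(y) \;=\; \int_{\ga}\Delta_y\, r(x,y)\,d\mucan(x)
\;=\; \int_{\ga}\big(2\mucan - 2\delta_x\big)\,d\mucan(x)
\;=\; 2\mucan\cdot\mucan(\ga) - 2\mucan \;=\; 0,
\end{equation*}
since $\int_{\ga}\delta_x\,d\mucan(x) = \mucan$ and $\mucan(\ga)=1$. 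Thus $h$ is harmonic on the compact connected metrized graph $\ga$; Green's identity then gives $\int_{\ga}\big(\frac{\partial h}{\partial x}\big)^2 dx = \int_{\ga} h\,\Delta h = 0$, so $h$ is constant on $\ga$. Consequently $\frac14\int_{\ga}\big(\frac{\partial}{\partial x}r(x,y)\big)^2 dx = \frac12 h$ is a constant, independent of $y$, and by definition (\cite[Section 14]{BRh}) this constant is $\tg$; the positivity of $\tg$ is then immediate from $r\ge 0$, $r\not\equiv 0$.

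The genuine work here is foundational rather than conceptual. The main point to get right is the precise set-up of the measure-valued Laplacian and Green's identity on $\ga$, so that the pairings $\int_{\ga} r(x,y)\,\Delta_x r(x,y)$ and $\int_{\ga}\Delta_y r(x,y)\,d\mucan(x)$ are legitimate — the integrands are only piecewise smooth and the Laplacians carry atoms at $y$ and at the vertices of $\ga$ — and to have available the two inputs invoked above: the identity $\mucan = \delta_y + \frac12\Delta_x r(x,y)$, with its implicit claim of $y$-independence, and the fact that a harmonic function on a compact connected metrized graph is constant. Both belong to the foundational theory of \cite{BRh}; once they are in hand, the argument is just the three displayed computations.
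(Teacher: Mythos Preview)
The paper does not supply its own proof of this lemma: it is quoted verbatim from \cite[Lemma 14.4]{BRh} and used as a black box, so there is no in-paper argument to compare against. Your write-up is a faithful reconstruction of the standard Baker--Rumely argument: apply Green's identity to $r(\cdot,y)$, use the defining relation $\Delta_x r(x,y)=2\mucan-2\delta_y$ to rewrite the Dirichlet energy as $2\int_\ga r(x,y)\,d\mucan(x)$, and then check that this integral is harmonic in $y$, hence constant, and equal to $2\tg$ by the canonical-measure definition of the tau constant. The only point that deserves a word of caution is the last step, ``by definition this constant is $\tg$'': in \cite{BRh} the tau constant is introduced via the diagonal value $g_{\mucan}(x,x)$ of the Arakelov--Green function, and the identity $\tg=\tfrac12\int_\ga r(x,y)\,d\mucan(x)$ is itself a short lemma rather than the primary definition; you are implicitly invoking that equivalence. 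With that understood, the argument is correct.
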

One can find more detailed information on $\tg$ in articles \cite{C1}, \cite{C2}, \cite{C3} and \cite{C6}.

On a metrized graph $\ga$, we have a \textit{canonical measure} $\mu_\can$ first studied by Chinburg and Rumely \cite{CR}.
See the articles \cite{BRh} and \cite{C2} for several interpretations of $\mu_{can}$.
The following theorem gives an explicit description of $\mu_{can}$:
\begin{theorem}\cite[Theorem 2.11]{CR} \label{thmCanonicalMeasureFormula}
For a given metrized graph $\ga$, let $\li$ and $\ri$ be defined as before. Then we have
\begin{equation*}
\mu_\can(x) \ = \ \sum_{p \in \vv{\ga}} (1 - \frac{1}{2}\vb(p))
\, \delta_p(x) + \sum_{e_i \in \ee{\ga}} \frac{dx}{L_i+R_i}.
\end{equation*}
\end{theorem}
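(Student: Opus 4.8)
The plan is to reduce the formula to a local computation of the Laplacian $\Delta_x r(x,y)$ of the resistance function. Recall from \cite{BRh} (see also \cite{CR}) that, with the sign convention $\Delta f=-f''(x)\,dx-\sum_{p}\bigl(\sum_{\vec v\text{ at }p}d_{\vec v}f(p)\bigr)\delta_p$ for the Laplacian on $\ga$, the canonical measure is characterized, for every fixed $y\in\ga$, by
\[
\Delta_x\, r(x,y)\;=\;2\,\mucan(x)-2\,\delta_y(x),
\]
equivalently $\mucan=\delta_y+\tfrac12\,\Delta_x r(x,y)$ (and $g_{\mucan}(x,y)=-\tfrac12 r(x,y)+\text{const}$). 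Since the right-hand side of the asserted formula does not depend on $y$, it suffices to take $y\in\vv{\ga}$ and to compute $\Delta_x r(x,y)$, splitting it into the part carried by the open edges and the part carried by $\vv{\ga}$. As a preliminary check on total masses: $\sum_{p}(1-\tfrac12\va(p))=v-e$ by the handshake lemma, while Foster's network theorem together with $r_{\ga}(p_i,q_i)=L_iR_i/(L_i+R_i)$ gives $\sum_{e_i}R_i/(L_i+R_i)=v-1$, hence $\sum_{e_i}L_i/(L_i+R_i)=\gga$, so the asserted measure has total mass $(v-e)+\gga=1$.

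For the open-edge part, fix an edge $e_i$ with endpoints $p_i,q_i$, and observe that every vertex $y$ lies outside the interior $e_i^{\circ}$. Parametrize $e_i$ by arc length $t\in[0,L_i]$ from $p_i$, and let $x$ be the point at parameter $t$. Deleting $e_i^{\circ}$ and replacing the resulting three-terminal network on $\{p_i,q_i,y\}$ by an equivalent star with arms of lengths $\alpha,\beta,\gamma$ meeting $p_i,q_i,y$ respectively (so that $\alpha+\beta=R_i$), a series--parallel reduction gives
\[
r(x,y)\;=\;\frac{(t+\alpha)(L_i-t+\beta)}{L_i+\alpha+\beta}+\gamma\;=\;\frac{(t+\alpha)(L_i-t+\beta)}{L_i+R_i}+\gamma .
\]
Hence $r(\cdot,y)$ restricted to $e_i^{\circ}$ is a concave parabola with $\partial_t^2 r(x,y)=-2/(L_i+R_i)$, so the open-edge part of $\Delta_x r(x,y)$ equals $\sum_{e_i}\tfrac{2\,dx}{L_i+R_i}$; after multiplying by $\tfrac12$ this yields the edge term of the formula.

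For the vertex part, the crucial local identity is
\[
d_{\vec v}\, r(\cdot,y)(p)\;=\;1-2\,I_{\vec v}(p\to y)
\]
for any direction $\vec v$ at a vertex $p$, where $I_{\vec v}(p\to y)$ is the current leaving $p$ along $\vec v$ when a unit current is injected at $p$ and extracted at $y$. To prove it, take $x$ at arc length $s$ along $\vec v$ and write the unit-current potential from $x$ to $y$ as the superposition of the unit-current potential from $p$ to $y$ and the dipole unit-current potential from $x$ to $p$; differentiating at $s=0$, evaluating the dipole term by reciprocity and using $\tfrac{d}{ds}r(x,p)\big|_{s=0}=1$, one obtains the stated value. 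Summing over the $\va(p)$ directions at $p$, and using that the total current leaving $p$ is $1$ when $p\neq y$ and $0$ when $p=y$, gives $\sum_{\vec v\text{ at }p}d_{\vec v}r(\cdot,y)(p)=\va(p)-2$ for $p\neq y$ and $=\va(p)$ for $p=y$; equivalently the vertex part of $\Delta_x r(x,y)$ is $\sum_{p\in\vv{\ga}}(2-\va(p))\,\delta_p-2\,\delta_y$. Adding $\delta_y$ and multiplying by $\tfrac12$, the $\delta_y$ cancels the term $-\delta_y$ and the vertex part becomes $\sum_{p\in\vv{\ga}}(1-\tfrac12\va(p))\,\delta_p$, which together with the edge term above is exactly $\mucan$.

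The main obstacle is the vertex part: pinning down the precise coefficient $1-\tfrac12\va(p)$ at each vertex, rather than merely the total vertex mass $-2\gga$, rests on the local identity $d_{\vec v}r(\cdot,y)(p)=1-2I_{\vec v}(p\to y)$, and hence on the superposition-and-reciprocity analysis of the resistance function near a vertex sketched above. One should also confirm the degenerate edge types --- self-loops (where $R_i=0$) and bridges (where $R_i=\infty$, so that $\partial_t^2 r(x,y)=0$) --- but the computations of the second and third steps carry over to them without change.
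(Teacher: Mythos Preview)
The paper does not give its own proof of this statement: it is quoted verbatim as \cite[Theorem~2.11]{CR} and used as background, so there is no in-paper argument to compare against. Your proposal, however, is a correct and self-contained proof along the lines one finds in the Chinburg--Rumely and Baker--Rumely treatments: you use the defining relation $\mucan=\delta_y+\tfrac12\Delta_x r(x,y)$, compute the continuous part of $\Delta_x r(x,y)$ edge-by-edge via the series--parallel (star) reduction, and recover the atomic part at each vertex from the directional derivatives of $r(\cdot,y)$.

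Two small remarks. First, your vertex identity $d_{\vec v}\,r(\cdot,y)(p)=1-2\,I_{\vec v}(p\to y)$ is actually already contained in your edge computation: differentiating the parabola $r(x,y)=\dfrac{(t+\alpha)(L_i-t+\beta)}{L_i+R_i}+\gamma$ at $t=0$ gives $1-\dfrac{2\alpha}{L_i+R_i}$, and a current-divider argument on the same star identifies $\alpha/(L_i+R_i)$ with $I_{\vec v}(p_i\to y)$; the separate ``superposition and reciprocity'' justification is therefore not needed. Second, for a self-loop the three-terminal star reduction degenerates since $p_i=q_i$; it is cleanest to treat that case directly (with $R_i=0$ one gets $r(x,y)=\tfrac{t(L_i-t)}{L_i}+r(p_i,y)$ on the loop, and the same conclusions follow). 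With these clarifications your argument is complete.
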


Let $\ga$ be a metrized graph and let $\bq : \ga \rightarrow  \NN$ be a function supported on the set of vertices of $\ga$.
The canonical divisor $K$ of $(\ga,\bq)$ is defined to be the following divisor on $\ga$:
\begin{equation}\label{eqn app2a}
\begin{split}
K  = \sum_{p \in \vv{\ga}} (\va(p)-2+2 \bq (p))p, \quad \text{and} \quad
 \dd{K}(x)  = \sum_{p \in \vv{\ga}} (\va(p)-2+2 \bq (p))\dd{p}(x).
\end{split}
\end{equation}
The pair $(\ga,\bq)$ is called a \textit{polarized metrized graph} (pm-graph in short) if $\bq$ is non-negative and $K$ is an effective divisor. Whenever $\bq=0$, $(\ga,\bq)$ is called a simple pm-graph. The \textit{genus} $\gc (\ga)$ of a pm-graph $(\ga,\bq)$ is defined to be
\begin{equation}\label{eqn genus}
\begin{split}
\gc (\ga) = 1+\frac{1}{2}\deg{K}=g(\ga)+\sum_{p \in \vv{\ga}}\bq (p).
\end{split}
\end{equation}
We will simply use $\gc$ instead of $\gc(\ga)$ when there is no danger of confusion.

Let $\nn{ad}(x)$ be the admissible measure associated to $K$ (defined by Zhang \cite[Lemma 3.7]{Zh1}). We have
$$\nn{ad}(x) = \frac{1}{\gc} \Big(\sum_{p \in \vv{\ga}}\bq (p) \dd{p}(x) +
\sum_{e_i \in \ee{\ga}} \frac{dx}{L_{i}+R_{i}} \Big).$$
Then, if we use \thmref{thmCanonicalMeasureFormula}, we can relate $\nn{ad}(x)$ and $\nn{can}(x)$ as follows:
\begin{equation}\label{eqn app2b}
\begin{split}
\nn{ad}(x) = \frac{1}{2 \gc}(2\nn{can}(x)+\dd{K}(x)).
\end{split}
\end{equation}
Moreover, $\dd{K}(\ga) =  \deg(K)  =  2 \gc-2$, and we have $\nn{can}(\ga) =  1 = \nn{ad}(\ga)$.

On a pm-graph $(\ga,\bq)$, we defined and studied (\cite{C1} and \cite{C5}) the invariant $\tcg$ as follows:
\begin{equation}\label{eqn tcg}
\begin{split}
\tcg:=\sum_{p, \, q \in \, \vv{\ga}}(\va(p)-2+2 \bq (p))(\va(q)-2+2 \bq (q))r(p,q).
\end{split}
\end{equation}
We have $\tcg \geq 0$ for any pm-graph $\ga$, since the canonical divisor $K$ is effective.

Now, we can give definitions of the invariants $\ed$, $\vg$, $\lag$ and $Z(\ga)$ (c.f. \cite[Section 4.1]{Zh2}) of $\ga$:
\begin{equation}\label{eqn app3}
\begin{aligned}
\ed &=\iint_{\Gamma \times \Gamma} r(x,y) \dd{K}(x)  \nn{ad}(x), & \quad Z(\ga) &=\frac{1}{2} \iint_{\Gamma \times \Gamma} r(x,y) \nn{ad}(x) \nn{ad}(y),
\\
\vg &=3 \gc \cdot Z(\ga) -\frac{1}{4} (\ed +\ell (\ga)), & \quad \lag &=\frac{\gc-1}{6 (2 \gc+1)} \vg +\frac{1}{12}(\ed +\elg).
\end{aligned}
\end{equation}
%
%
%
%

We can express invariants given in \eqnref{eqn app3} in terms of $\tg$ and $\tcg$ (\cite[Propositions 4.6, 4.7, 4.9 and Theorem 4.8]{C5}):
\begin{theorem}\label{thm pmginv and tau}
Let $(\ga,\bq)$ be a pm-graph. Then we have
\begin{align*}\label{eqn pmginv and tau}
\vg &= \frac{(5 \gc -2) \tg}{\gc}+\frac{\tcg}{4 \gc}-\frac{\elg}{4},&  Z(\ga) &= \frac{(2 \gc -1)\tg}{\gc^2}+\frac{\tc(\ga)}{8 \gc^2},
\\ \lag &= \frac{(3\gc -3) \tg}{4 \gc+2}+\frac{\tcg}{16 \gc+8}+\frac{(\gc +1)\elg}{16 \gc+8},&  \ed & = \frac{(4 \gc-4) \tg}{\gc} + \frac{\tcg}{2 \gc}.
\end{align*}
\end{theorem}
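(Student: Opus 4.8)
The plan is to reduce all four identities to one analytic fact about the canonical measure and then finish by formal manipulations of bilinear integrals. First, the expressions for $\vg$ and $\lag$ are purely algebraic consequences of those for $Z(\ga)$ and $\ed$: from \eqnref{eqn app3} we have $\vg=3\gc\cdot Z(\ga)-\frac14(\ed+\elg)$ and $\lag=\frac{\gc-1}{6(2\gc+1)}\vg+\frac1{12}(\ed+\elg)$, so it suffices to prove the stated formulas for $Z(\ga)$ and $\ed$. For these I would substitute the decomposition $\nn{ad}=\frac1{2\gc}\big(2\mucan+\dd{K}\big)$ from \eqnref{eqn app2b} into the double integrals defining $\ed$ and $Z(\ga)$, expand by bilinearity and the symmetry $r(x,y)=r(y,x)$, and collect terms. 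Everything is then carried by the three ``master integrals''
\[
A=\iint_{\ga\times\ga}r(x,y)\,\mucan(x)\,\mucan(y),\qquad
B=\iint_{\ga\times\ga}r(x,y)\,\dd{K}(x)\,\mucan(y),\qquad
C=\iint_{\ga\times\ga}r(x,y)\,\dd{K}(x)\,\dd{K}(y),
\]
and here $C=\tcg$ is immediate from \eqnref{eqn tcg} and the formula for $\dd{K}(x)$ in \eqnref{eqn app2a}.

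The crux is the evaluation $\int_\ga r(x,y)\,\mucan(x)=2\tg$ for every fixed $y\in\ga$. I would deduce this from the Laplacian identity $\Delta_x r(x,y)=2\mucan(x)-2\dd{y}(x)$, equivalently from the constancy of the diagonal $\gr{\mucan}{x}{x}$ of the canonical Green's function, which is a standard property of $\mucan$ and follows from \thmref{thmCanonicalMeasureFormula}. Combining it with the integration-by-parts identity $\int_\ga\big(\frac{\partial}{\partial x}f\big)^2\,dx=\int_\ga f\,\Delta_x f$ and with \lemref{lemtauformula} gives
\[
\tg=\frac14\int_\ga r(x,y)\,\Delta_x r(x,y)=\frac14\int_\ga r(x,y)\big(2\mucan(x)-2\dd{y}(x)\big)=\frac12\int_\ga r(x,y)\,\mucan(x),
\]
since $r(y,y)=0$. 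By symmetry the same computation gives $\int_\ga r(x,y)\,\mucan(y)=2\tg$ for every fixed $x$. Hence $A=2\tg$; and integrating first in the variable carrying $\mucan$, $B=\int_\ga 2\tg\,\dd{K}(x)=2\tg\cdot\deg K=(4\gc-4)\tg$, because $\dd{K}(\ga)=\deg K=2\gc-2$ by \eqnref{eqn genus}.

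Substituting $A$, $B$, $C$ back into the expansions from the first step yields $\ed=\frac1{2\gc}(2B+C)=\frac{(4\gc-4)\tg}{\gc}+\frac{\tcg}{2\gc}$ and $Z(\ga)=\frac1{8\gc^2}(4A+4B+C)=\frac{(2\gc-1)\tg}{\gc^2}+\frac{\tcg}{8\gc^2}$, and then the formulas for $\vg$ and $\lag$ follow from their defining relations in \eqnref{eqn app3} after elementary simplification. The one genuinely non-formal ingredient, and the step I expect to be the main obstacle, is the constancy of $\gr{\mucan}{x}{x}$ (equivalently the identity $\Delta_x r(x,y)=2\mucan(x)-2\dd{y}(x)$): a self-contained verification means unwinding \thmref{thmCanonicalMeasureFormula} --- writing $r(x,y)$ as a quadratic in arc length on each edge with coefficients built from the $R_i$, differentiating twice on edge interiors, and summing the outgoing directional derivatives at each vertex, then matching the result with $2\mucan$ away from $y$. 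I would also double-check the precise normalization of the integrals defining $\ed$ and $Z(\ga)$ in \eqnref{eqn app3}, since all the numerical coefficients above depend on it.
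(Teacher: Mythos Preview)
The paper does not give a proof of this theorem; it is quoted from \cite[Propositions~4.6, 4.7, 4.9 and Theorem~4.8]{C5}. Your argument is correct and is the natural route (and, in outline, the one taken in \cite{C5}): expand $\nn{ad}$ via \eqnref{eqn app2b}, reduce $Z(\ga)$ and $\ed$ to the three master integrals $A,B,C$, and invoke the key identity $\int_\ga r(x,y)\,\mucan(x)=2\tg$ for every $y$. That identity is precisely the constancy of the diagonal of the canonical Green's function, equal to $\tg$, which is the characterizing property of $\mucan$ in the Chinburg--Rumely/Baker--Rumely theory; once you have it, your bookkeeping for $A,B,C$ and the algebraic derivation of $\vg,\lag$ from \eqnref{eqn app3} are all correct.

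Two small remarks. First, the sign in your Laplacian step $\Delta_x r(x,y)=2\mucan(x)-2\dd{y}(x)$ is convention-dependent; a cleaner self-contained derivation is to write $r(x,y)=\gr{\mucan}{x}{x}+\gr{\mucan}{y}{y}-2\gr{\mucan}{x}{y}=2\tg-2\gr{\mucan}{x}{y}$ and integrate against $\mucan$, using $\int_\ga \gr{\mucan}{x}{y}\,\mucan(x)=0$. Second, you were right to worry about the normalization in \eqnref{eqn app3}: the second measure in the definition of $\ed$ is a typo and should read $\nn{ad}(y)$ rather than $\nn{ad}(x)$, exactly as you assumed.
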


\begin{remark}\label{rem valence}
Given a pm-graph $(\ga,\bq)$ and $s \in \ga-\vv{\ga}$, we have $\bq (s)=0$ and $\va(s)=2$. If we enlarge the vertex set by
considering $s$ as a vertex with the same $\bq$ value on $s$, $\tcg$ does not change. We recall that this process also does not change the value of $\tg$ (the valence property of $\tg$, see \cite[Remark 2.10]{C2}). Conversely, if $\bq (s)=0$ and $\va(s)=2$ for some $s \in \vv{\ga}$ with $\vv{\ga}$ has at least two elements, then removing $s$ from the vertex set of $\ga$ does not change $\tg$ and $\tcg$ (such vertices are called eliminable vertices in \cite[pg. 152]{KY1}).
Using these observations along with \thmref{thm pmginv and tau}, we note that
$\ed$, $Z(\ga)$, $\vg$ and $\lag$ do not change under this process. That is, each of these invariants has the valence property.
\end{remark}

Let $(\ga,\bq)$ be a polarized metrized graph containing a self loop of length $L$ at a vertex $p$ with $\va(p)\geq 3$. Let $\beta$ be a metrized graph obtained from $\ga$ by deleting this self loop. We still have $p \in \vv{\beta}$ but its valence is reduced by $2$. Now, we consider a function $\bar{\bq} : \beta \rightarrow  \NN$ supported on the set of vertices of $\beta$ that satisfies $\bar{\bq}(p)=\bq(p)+1$ and  $\bar{\bq}(x)=\bq(x)$ for every $x \in \beta-\{p\}$. Note that $(\beta,\bar{\bq})$ is a polarized metrized graph, and that $\gc(\ga)=\gc(\beta)$, $\ell(\ga)=\ell(\beta)+L$, $\tcg=\theta(\beta)$ and $\tg=\tau(\beta)+\frac{L}{12}$ by the additive property of $\tg$ \cite[pg. 15]{C2} and by the fact that a self loop of length $L$ has the tau constant value $\frac{L}{12}$. Using these facts along with \thmref{thm pmginv and tau}, we obtain the following equalities:

\begin{equation}\label{eqn pmginv and self loop}
\begin{aligned}
\vg &= \varphi(\beta)+\frac{\gc -1}{6 \gc}L,  & Z(\ga) &= Z(\beta)+\frac{2 \gc-1}{12 \gc ^2}L,
\\ \lag &= \lambda(\beta)+\frac{\gc}{8 \gc+4}L, & \ed &= \epsilon(\beta)+\frac{ \gc-1}{3 \gc}L.
\end{aligned}
\end{equation}
Successive application of the process above gives a pm-graph $\beta$ that is either a graph with one vertex and a self loop at this vertex or a graph without self loops. For the first case, we can use the following proposition (part of which is nothing but \cite[Proposition 4.4.3]{Zh2}. Here we give a new proof.):
\begin{proposition}\label{prop bouquet}
Let $(\ga,\bq)$ be a pm-graph with one vertex and $e \geq 1$ self loops at its vertex. Then
\begin{equation*}\label{eqn bouquet1}
\begin{aligned}
\vg &= \frac{\gc -1}{6 \gc}\elg,  & Z(\ga) &= \frac{2 \gc-1}{12 \gc ^2}\elg,
\\ \lag &= \frac{\gc}{8 \gc+4}\elg, & \ed &= \frac{ \gc-1}{3 \gc}\elg.
\end{aligned}
\end{equation*}
\end{proposition}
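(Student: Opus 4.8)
The strategy is to invoke \thmref{thm pmginv and tau}, which expresses $\vg$, $Z(\ga)$, $\lag$ and $\ed$ through $\tg$ and $\tcg$; so the whole statement reduces to evaluating these two constants for a bouquet of $e \ge 1$ self loops and then substituting.

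First I would record the combinatorial data. The graph $\ga$ has a single vertex $p$, and since each self loop contributes two directions at $p$ we have $\va(p) = 2e$; hence $\gga = e$ and $\gc = e + \bq(p) \ge 1$, so every denominator occurring in \thmref{thm pmginv and tau} is nonzero. Because $\vv{\ga} = \{p\}$, the double sum in \eqnref{eqn tcg} defining $\tcg$ collapses to the single term $(\va(p) - 2 + 2\bq(p))^2\, r(p,p)$, which vanishes since $r(p,p) = 0$. Therefore $\tcg = 0$.

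Next I would compute $\tg$. Viewing $\ga$ as the wedge, at $p$, of $e$ circles of lengths $L_1, \dots, L_e$, the additive property of the tau constant \cite[pg.~15]{C2}, together with the fact (recalled just before the statement) that a self loop of length $L_i$ has tau constant $L_i/12$, yields $\tg = \frac{1}{12}\sum_{i=1}^{e} L_i = \frac{\elg}{12}$.

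Finally I would substitute $\tg = \elg/12$ and $\tcg = 0$ into the four identities of \thmref{thm pmginv and tau} and simplify. For instance $\vg = \frac{(5\gc - 2)}{12\gc}\elg - \frac{\elg}{4} = \frac{2\gc - 2}{12\gc}\elg = \frac{\gc - 1}{6\gc}\elg$, and the other three follow in exactly the same way (for $\lag$ one uses $4\gc + 2 = 2(2\gc + 1)$ to combine the two surviving terms). I do not expect any real obstacle: once $\tcg = 0$ and $\tg = \elg/12$ are in hand the proposition is a short algebraic verification, and the only external input is the value $\elg/12$ for $\tg$, which rests on the additivity of the tau constant and on the known tau constant of a circle.
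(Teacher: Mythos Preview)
Your proposal is correct and follows essentially the same approach as the paper: both reduce the proposition to the identities $\tg=\elg/12$ and $\tcg=0$ and then invoke \thmref{thm pmginv and tau}. The only organizational difference is that the paper verifies the case $e=1$ this way and then handles $e>1$ by inductively peeling off self loops via \eqnref{eqn pmginv and self loop}, whereas you treat all $e\ge 1$ at once by applying the additivity of $\tau$ directly to the full bouquet; the underlying ingredients are identical.
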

\begin{proof}
If $\ga$ has just one self loop, then $\tg =\frac{\elg}{12}$ and $\tcg=0$. Thus, the result follows from \thmref{thm pmginv and tau}
in this case. Applying the procedure given in \eqnref{eqn pmginv and self loop}, one can show that the result still holds if $\ga$ has more than one self loop.
\end{proof}

Hence, we can focus on pm-graphs with no self loop.

\section{Discrete Laplacian}\label{sec discrete}

In this section, we first describe the discrete Laplacian matrix of a finite weighted graph with no self loops and multiple edges. Then following \cite{C6}, we define discrete Laplacian matrices associated to a metrized graph $\ga$ and a pm-graph $(\ga,\bq)$. Then we express invariants of a pm-graph in terms of the associated discrete Laplacian matrices, their pseudo inverse and values of $\bq$. This enables us to achieve our main goal in this paper. Namely, we derive a feasible algorithm for both numeric and symbolic computations of pm-graph invariants.

To have a well-defined discrete Laplacian matrix $\lm$ for a metrized
graph $\ga$, we first choose a vertex set $\vv{\ga}$ for $\ga$ in
such a way that there are no self-loops, and no multiple edges
connecting any two vertices. This can be done by
enlarging the vertex set by considering additional valence two points as vertices
whenever needed. We call such a vertex set $\vv{\ga}$ \textit{adequate}.
If distinct vertices $p$ and $q$ are the end points of an edge, we
call them \textit{adjacent} vertices.

Let $\ga$ be a metrized graph with $e$ edges and an adequate vertex set
$\vv{\ga}$ containing $v$ vertices. Fix an ordering of the vertices
in $\vv{\ga}$. Let $\{L_1, L_2, \cdots, L_e\}$ be a labeling of the
edge lengths. The matrix $\am=(a_{pq})_{v \times v}$ given by
\[
a_{pq}=\begin{cases} 0, & \quad \text{if $p = q$, or $p$ and $q$ are
not adjacent}.\\
\frac{1}{L_k}, & \quad \text{if $p \not= q$, and an edge of length $L_k$ connects $p$ and $q$.}\\
\end{cases}
\]
is called the \textit{adjacency matrix} of $\ga$. Let $\dm=\diag(d_{pp})$ be
the $v \times v$ diagonal matrix given by $d_{pp}=\sum_{s \in
\vv{\ga}}a_{ps}$. Then $\lm:=\dm-\am$ is called the \textit{discrete
Laplacian matrix} of $\ga$. That is, $ \lm =(l_{pq})_{v \times v}$ where
\[
l_{pq}=\begin{cases} 0, & \; \, \text{if $p \not= q$, and $p$ and $q$
are not adjacent}.\\
-\frac{1}{L_k}, & \; \, \text{if $p \not= q$, and $p$ and $q$ are
connected by} \text{ an edge of length $L_k$}\\
-\sum_{s \in \vv{\ga}-\{p\}}l_{ps}, & \; \, \text{if $p=q$}
\end{cases}.
\]

We define the discrete Laplacian matrix corresponding to pm-graph $(\ga,\bq)$ as the
discrete Laplacian $\lm$ corresponding to the metrized graph $\ga$. Important thing is
that if the vertex set of $(\ga,\bq)$ is needed to be enlarged to make it an adequate
vertex set we set $\bq$ value $0$ for those new vertices added because of self loops
or multiple edges.

Although $\lm$ is not invertible, it has generalized inverses. In
particular, it has the pseudo inverse $\plm$, also known as the
Moore-Penrose generalized inverse. The pseudo inverse $\plm$ is uniquely determined by
the following properties:
\begin{align*}
i)  \quad &\lm \plm \lm  = \lm, & \qquad \qquad
iii) \quad &(\lm \plm)^{T}   = \lm \plm,
\\ ii) \quad &\plm \lm \plm  = \plm, & \qquad \qquad
iv) \quad &(\plm \lm)^{T}   = \plm \lm .
\end{align*}

For a discrete Laplacian matrix $\lm$ of size $v \times v$, the following
formula for $\plm$ (see \cite[ch 10]{C-S}) is crucial for our computations:
\begin{equation}\label{eqn pseudo inverse}
 \plm = \big( \lm - \frac{1}{v}\jm \big)^{-1} + \frac{1}{v} \jm.
\end{equation}
where $\jm$ is of size $v \times v$ and has all entries $1$.

$\lm$ and $\plm$ are symmetric matrices. 
\begin{remark}\label{rem doubcent1}
We have  $\sum_{p \in \vv{\ga}} \lpq = 0 =\sum_{p \in \vv{\ga}} \plpq $,
for each $q \in \vv{\ga}$. 
\end{remark}

\begin{lemma} \cite{RB2}, \cite{RB3}, \cite[Theorem A]{D-M} \label{lem disc2}
Suppose $\ga$ is a graph with the discrete Laplacian $\lm$ and the
resistance function $r(x,y)$.
For the pseudo inverse $\plm$ we have
$$r(p,q)=l_{pp}^+-2l_{pq}^+ + l_{qq}^+, \quad \text{for any $p$, $q$ $\in \vv{\ga}$}.$$
\end{lemma}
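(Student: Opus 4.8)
The plan is to establish the identity $r(p,q) = l_{pp}^+ - 2l_{pq}^+ + l_{qq}^+$ by using the physical interpretation of the resistance function together with the defining properties of the pseudo inverse $\plm$ and equation \eqnref{eqn pseudo inverse}. The key idea is that $r(p,q)$ can be computed by injecting a unit current at $q$ and extracting it at $p$: if $\vw = e_q - e_p$ denotes the corresponding external current vector (where $e_p$ is the standard basis vector), then Kirchhoff's and Ohm's laws say that the vector $\mathbf{u}$ of vertex potentials satisfies $\lm \mathbf{u} = \vw$, and the resistance is $r(p,q) = u_q - u_p = \vw^T \mathbf{u}$. Since $\vw$ lies in the column space of $\lm$ (it has entries summing to zero, which by \remref{rem doubcent1} is exactly the orthogonal complement of the kernel of the symmetric matrix $\lm$), a particular solution is $\mathbf{u} = \plm \vw$ by property $(i)$ of the pseudo inverse, and any other solution differs by an element of $\ker \lm$, i.e.\ a constant vector, which does not affect $\vw^T \mathbf{u}$ since $\vw$ sums to zero.

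Carrying this out: first I would verify that $\vw = e_q - e_p$ is in the image of $\lm$. This follows because $\lm$ is symmetric, its kernel is spanned by the all-ones vector $\mathbf{1}$ (the graph is connected), and $\mathbf{1}^T \vw = 0$, so $\vw \perp \ker\lm = (\operatorname{im}\lm)^\perp$. Next, setting $\mathbf{u} = \plm\vw$, I would compute $\lm\mathbf{u} = \lm\plm\vw = \vw$ using property $(i)$ together with the fact that $\lm\plm$ is the orthogonal projection onto $\operatorname{im}\lm$ (which is where $\vw$ already lives; this uses properties $(i)$ and $(iii)$). Then $r(p,q) = \vw^T\mathbf{u} = (e_q - e_p)^T \plm (e_q - e_p) = l_{qq}^+ - l_{pq}^+ - l_{qp}^+ + l_{pp}^+$, and since $\plm$ is symmetric this is $l_{pp}^+ - 2l_{pq}^+ + l_{qq}^+$, as claimed.

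Alternatively, and perhaps more cleanly, one can avoid invoking the electrical network setup directly and instead argue purely matrix-algebraically from \eqnref{eqn pseudo inverse}: writing $\mm = \lm - \frac{1}{v}\jm$, one has $\plm = \mm^{-1} + \frac{1}{v}\jm$, and since $\jm\vw = 0$ for $\vw = e_q - e_p$, the quantity $\vw^T\plm\vw$ equals $\vw^T\mm^{-1}\vw$; one then checks that $\mm^{-1}\vw$ solves $\lm\mathbf{u} = \vw$ up to an additive constant, because $\mm\mathbf{x} = \vw$ gives $\lm\mathbf{x} = \vw + \frac{1}{v}\jm\mathbf{x} = \vw + c\mathbf{1}$ for the scalar $c = \frac{1}{v}\sum_s x_s$. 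The main obstacle is the careful bookkeeping around the kernel of $\lm$: one must be precise that constant shifts in the potential vector are exactly the ambiguity, that $\vw$ annihilates this ambiguity, and that the defining properties $(i)$--$(iv)$ (especially the symmetry conditions $(iii)$ and $(iv)$) force $\lm\plm$ and $\plm\lm$ to be the orthogonal projector onto $\operatorname{im}\lm$ along $\ker\lm$. Everything else is a short computation. Since this identity is attributed to \cite{RB2}, \cite{RB3}, and \cite[Theorem A]{D-M}, I would likely just sketch this argument and cite those sources for the full details.
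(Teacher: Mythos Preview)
The paper does not prove this lemma at all; it is stated with attributions to \cite{RB2}, \cite{RB3}, and \cite[Theorem~A]{D-M} and then used as a black box. Your proposal therefore goes beyond what the paper does, supplying a self-contained argument where the paper simply cites the literature.

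Your main argument is correct and is the standard one: with $\vw = e_q - e_p$ one has $\vw \perp \mathbf{1}$, hence $\vw \in \operatorname{im}\lm$, and then $\lm\plm\vw = \vw$ because $\lm\plm$ is the orthogonal projector onto $\operatorname{im}\lm$ (from properties $(i)$ and $(iii)$); the quadratic form $\vw^T\plm\vw$ then unwinds to $l_{pp}^+ - 2l_{pq}^+ + l_{qq}^+$. One small wording issue in your alternative route: you say $\mm^{-1}\vw$ solves $\lm\mathbf{u} = \vw$ ``up to an additive constant,'' but in fact it solves it exactly. From $\lm\mathbf{x} = \vw + c\mathbf{1}$ and $\mathbf{1}^T\lm = 0$, $\mathbf{1}^T\vw = 0$, one gets $vc = 0$, so $c = 0$. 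The ambiguity ``up to an additive constant'' belongs to the \emph{solution} $\mathbf{u}$, not to the right-hand side; this is harmless for the conclusion but worth stating precisely if you write it out.
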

It is important that $\tg$ can be expressed in terms of the discrete Laplacian matrix and its pseudo inverse:
\begin{theorem}\cite[Theorem 4.10]{C6}\label{thm disc2}
Let $\lm$ be the discrete Laplacian matrix of size $v \times v$ for a metrized graph
$\ga$, and let $\plm$ be its Moore-Penrose pseudo inverse. Suppose $\pp$ and $\qq$ are end points of $e_i \in \ee{\ga}$. Then we have
\begin{equation*}
\begin{split}
\tg & =-\frac{1}{12}\sum_{e_i \in \ee{\ga}} l_{\pp \qq}\big(\frac{1}{ l_{\pp \qq}}+l_{\pp \pp}^+-2l_{\pp
\qq}^+ +l_{\qq \qq}^+ \big)^2 +\frac{1}{4}\sum_{q, \, s  \in
\vv{\ga}} l_{qs} l_{qq}^+  l_{ss}^+ +\frac{1}{v}tr(\plm).
\end{split}
\end{equation*}
Moreover, $\sum_{q, \, s  \in
\vv{\ga}} l_{qs} l_{qq}^+  l_{ss}^+ = - \sum_{e_i \in \ee{\ga}} l_{p_i q_i} \big(l_{\pp \pp}^+ -  l_{\qq \qq}^+\big)^2 $.
\end{theorem}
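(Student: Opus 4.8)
The plan is to reduce the integral representation of $\tg$ in \lemref{lemtauformula} to the entries of $\lm$ and $\plm$, using \lemref{lem disc2}, \remref{rem doubcent1}, and the Moore--Penrose relations. Work with an adequate vertex set, so that no edge interior contains a vertex, and put $v=|\vv{\ga}|$. For any fixed $y\in\vv{\ga}$, \lemref{lemtauformula} gives
\[
\tg=\frac14\int_{\ga}\Big(\frac{\partial}{\partial x}r(x,y)\Big)^{2}dx=\frac14\sum_{e_i\in\ee{\ga}}\int_{e_i}\Big(\frac{\partial}{\partial x}r(x,y)\Big)^{2}dx,
\]
so the first task is to evaluate each edge integral.

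Parametrize $e_i$ by $t\in[0,\li]$, with $t=0$ at $\pp$ and $t=\li$ at $\qq$, and write $x=x(t)$ for the running point. Since $y$ lies outside the interior of $e_i$, the voltage function $x\mapsto\jj{\pp}{x}{y}$ (the potential at $x$ for unit current in at $y$, out at $\pp$) is affine along $e_i$, so $\jj{\pp}{x}{y}=\frac{t}{\li}\jj{\pp}{\qq}{y}$; a series/parallel reduction of the circuit gives $r(x,\pp)=\frac{t(\li-t+\ri)}{\li+\ri}$; and $\jj{\pp}{\qq}{y}=\tfrac12\big(r(\pp,\qq)+r(\pp,y)-r(\qq,y)\big)$ with $r(\pp,\qq)=\frac{\li\ri}{\li+\ri}$. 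Substituting these into the identity $r(x,y)=r(x,\pp)+r(y,\pp)-2\jj{\pp}{x}{y}$ and differentiating in $t$ yields, on $e_i$,
\[
\frac{\partial}{\partial x}r(x,y)=\frac{\li-2t}{\li+\ri}-\frac{r(\pp,y)-r(\qq,y)}{\li}.
\]
Squaring and integrating over $[0,\li]$ (the cross term integrates to $0$) gives $\int_{e_i}\big(\tfrac{\partial}{\partial x}r(x,y)\big)^{2}dx=\frac{\li^{3}}{3(\li+\ri)^{2}}+\frac{(r(\pp,y)-r(\qq,y))^{2}}{\li}$, with the convention $\ri=\infty$ when $e_i$ is a bridge, in which case the first term is $0$ and the formula is immediate. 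Summing over $i$, for \emph{every} $y\in\vv{\ga}$,
\[
\tg=\frac{1}{12}\sum_{e_i\in\ee{\ga}}\frac{\li^{3}}{(\li+\ri)^{2}}+\frac14\sum_{e_i\in\ee{\ga}}\frac{(r(\pp,y)-r(\qq,y))^{2}}{\li}.
\]

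Now average this identity over $y\in\vv{\ga}$. By \lemref{lem disc2}, $r(\pp,y)-r(\qq,y)=(\plpi-\plqi)-2(\pdl{\pp}{y}-\pdl{\qq}{y})$, and on expanding the square the linear cross term drops because $\sum_{y\in\vv{\ga}}\pdl{a}{y}=0$ for each $a$ (\remref{rem doubcent1}); hence
\[
\tg=\frac{1}{12}\sum_{e_i}\frac{\li^{3}}{(\li+\ri)^{2}}+\frac14\sum_{e_i}\frac{(\plpi-\plqi)^{2}}{\li}+\frac1v\sum_{y\in\vv{\ga}}\ \sum_{e_i}\frac{(\pdl{\pp}{y}-\pdl{\qq}{y})^{2}}{\li}.
\]
For the Laplacian quadratic form one has $\mathbf{x}^{T}\lm\mathbf{x}=\sum_{e_i\in\ee{\ga}}\frac{(x_{\pp}-x_{\qq})^{2}}{\li}$; applying this with $\mathbf{x}$ equal to the $y$-th column of $\plm$ identifies the inner sum with $(\plm\lm\plm)_{yy}$, so $\sum_{y}\sum_{e_i}\frac{(\pdl{\pp}{y}-\pdl{\qq}{y})^{2}}{\li}=\tr(\plm\lm\plm)=\tr(\plm)$ by property $(ii)$ of the pseudo inverse.

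Finally, applying the same quadratic-form identity with $\mathbf{x}$ the diagonal vector $(\pdl{a}{a})_{a}$ of $\plm$ gives $\sum_{e_i}\frac{(\plpi-\plqi)^{2}}{\li}=-\sum_{e_i}\lpqi(\plpi-\plqi)^{2}=\sum_{q,s\in\vv{\ga}}l_{qs}l_{qq}^{+}l_{ss}^{+}$, which is precisely the second (\emph{Moreover}) assertion of the theorem; and since $\lpqi=-1/\li$ and $\plpi-2\plpqi+\plqi=r(\pp,\qq)=\frac{\li\ri}{\li+\ri}$, one has $\frac{\li^{3}}{(\li+\ri)^{2}}=-\lpqi\big(\frac{1}{\lpqi}+\plpi-2\plpqi+\plqi\big)^{2}$. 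Substituting the last three identities into the previous display yields the stated formula. The main obstacle is the edge integral of the second paragraph: producing the correct closed form for $r(\cdot,y)$ restricted to a single edge via the circuit reduction and the harmonicity of the voltage function, and disposing of bridge edges cleanly. Everything after that is bookkeeping, the one genuinely useful move being to average over \emph{all} vertices $y$, which symmetrizes the $y$-dependent part into entries of $\plm$ and then, via $\plm\lm\plm=\plm$, collapses it to $\tr(\plm)$.
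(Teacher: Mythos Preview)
Your argument is correct. The paper does not supply a proof of this theorem at all; it is quoted verbatim from \cite[Theorem~4.10]{C6}, so there is nothing in the present paper to compare against directly.

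That said, the route you take is essentially the one used in \cite{C2} and \cite{C6}: the intermediate identity
\[
\tg=\frac{1}{12}\sum_{e_i}\frac{L_i^{3}}{(L_i+R_i)^{2}}+\frac14\sum_{e_i}\frac{(r(p_i,y)-r(q_i,y))^{2}}{L_i}
\]
is \cite[Proposition~2.9]{C2}, and the averaging over $y\in\vv{\ga}$ together with \lemref{lem disc2}, \remref{rem doubcent1} and $\plm\lm\plm=\plm$ is precisely how \cite{C6} converts that identity into discrete-Laplacian form. So your proof is not a new approach but a faithful reconstruction of the original one. The edge computation, the bridge case, the quadratic-form identity $\mathbf{x}^{T}\lm\mathbf{x}=\sum_{e_i}L_i^{-1}(x_{p_i}-x_{q_i})^{2}$, and the final substitutions all check out.
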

Similarly, one can express $\mu_{can}$ in terms of the discrete Laplacian matrix and its pseudo inverse:
\begin{proposition}\cite[Prop. 4.12]{C6}\label{prop mucan}
For a given metrized graph $\ga$, let $\lm$ be its discrete Laplacian, and let $\plm$ be the corresponding pseudo inverse.
Suppose $\pp$ and $\qq$ are end points of $e_i \in \ee{\ga}$. Then we have
$$\mu_\can(x) = \sum_{p \in \vv{\ga}} (1 - \frac{1}{2}\vb(p))
\, \delta_p(x) - \sum_{e_i \in \ee{\ga}} \big(l_{\pp \qq}+l_{\pp \qq}^2 (l_{\pp \pp}^+ -2 l_{\pp \qq}^+ +l_{\qq \qq}^+)\big)dx.$$
\end{proposition}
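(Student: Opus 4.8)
The plan is to reduce everything to the Chinburg--Rumely description of $\mucan$ in \thmref{thmCanonicalMeasureFormula} together with \lemref{lem disc2}. Since the atomic part $\sum_{p \in \vv{\ga}}(1-\frac12\vb(p))\delta_p(x)$ of $\mucan$ already appears verbatim in the claimed formula, the whole statement comes down to verifying, for each edge $e_i \in \ee{\ga}$ with (adjacent, distinct) end points $\pp$ and $\qq$, the scalar identity
\[
\frac{1}{L_i+R_i} \;=\; -\Big(\dl{\pp}{\qq}+\dl{\pp}{\qq}^2\big(\pdl{\pp}{\pp}-2\pdl{\pp}{\qq}+\pdl{\qq}{\qq}\big)\Big).
\]

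First I would feed in the two structural facts. Because the chosen vertex set $\vv{\ga}$ is adequate, $e_i$ is the unique edge joining $\pp$ and $\qq$, so by the definition of the discrete Laplacian $\dl{\pp}{\qq}=-1/L_i$; and \lemref{lem disc2} identifies $\pdl{\pp}{\pp}-2\pdl{\pp}{\qq}+\pdl{\qq}{\qq}$ with the resistance $r(\pp,\qq)$ in $\ga$. Substituting both into the right-hand side of the displayed identity collapses it to $\frac{1}{L_i}\big(1-\frac{r(\pp,\qq)}{L_i}\big)$, so the claim becomes exactly the statement $r(\pp,\qq)=\frac{L_i R_i}{L_i+R_i}$.

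That last equality is the only place any content enters, and it is the classical parallel-resistor rule: in the network $\ga$, the edge $e_i$ is a resistor of resistance $L_i$ placed in parallel with the remainder $\ga-e_i$, which by definition presents effective resistance $R_i$ between $\pp$ and $\qq$; hence $r(\pp,\qq)=\frac{L_i R_i}{L_i+R_i}$ (see \cite{BRh} or \cite{C2}). The bridge case $R_i=\infty$ is harmless: then $r(\pp,\qq)=L_i$ and both sides of the displayed identity vanish. I do not anticipate a real obstacle here — once \lemref{lem disc2} and the parallel rule are in hand the argument is essentially bookkeeping; the one point requiring care is that adequacy of $\vv{\ga}$ (no multiple edges, no self-loops) is precisely what makes $\dl{\pp}{\qq}=-1/L_i$ legitimate and what makes \thmref{thmCanonicalMeasureFormula} directly applicable edge by edge.
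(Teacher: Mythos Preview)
Your argument is correct: the atomic part matches \thmref{thmCanonicalMeasureFormula} verbatim, and for the continuous part you correctly reduce the identity to $r(\pp,\qq)=\frac{L_iR_i}{L_i+R_i}$ via $\dl{\pp}{\qq}=-1/L_i$ (adequacy of $\vv{\ga}$) and \lemref{lem disc2}, then invoke the parallel-circuit rule. Note that the present paper does not actually prove this proposition --- it is quoted from \cite[Prop.~4.12]{C6} --- but your derivation is exactly the natural one and is, in essence, how the result is obtained there as well.
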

Using \eqnref{eqn app2b} and \propref{prop mucan}, we express $\mu_{ad}(x)$ in terms of the entries of $\lm$ and $\plm$:
\begin{proposition}\label{prop muad}
For a given pm-graph $(\ga,\bq)$, let $\lm$ be its discrete Laplacian, and let $\plm$ be the corresponding pseudo inverse.
Suppose $\pp$ and $\qq$ are end points of $e_i \in \ee{\ga}$. Then we have
$$\mu_{ad}(x) = \frac{1}{\gc} \sum_{p \in \vv{\ga}} \bq(p)\, \delta_p(x)
- \frac{1}{\gc} \sum_{e_i \in \ee{\ga}} \big(l_{\pp \qq}+l_{\pp \qq}^2 (l_{\pp \pp}^+ -2 l_{\pp \qq}^+ +l_{\qq \qq}^+)\big)dx.$$
\end{proposition}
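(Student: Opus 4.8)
The plan is to derive the formula for $\mu_{ad}(x)$ directly by combining the explicit expression for $\mu_{can}(x)$ from \propref{prop mucan} with the relation \eqnref{eqn app2b} linking $\mu_{ad}$, $\mu_{can}$, and $\delta_K$. First I would write out \eqnref{eqn app2b}, namely $\mu_{ad}(x) = \frac{1}{2\gc}\big(2\mu_{can}(x) + \dd{K}(x)\big)$, and substitute the expression from \propref{prop mucan} for $\mu_{can}(x)$ together with the definition $\dd{K}(x) = \sum_{p \in \vv{\ga}}(\va(p)-2+2\bq(p))\dd{p}(x)$ from \eqnref{eqn app2a}.

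The next step is to collect the discrete (Dirac) part and the continuous ($dx$) part separately. For the discrete part, the coefficient of $\dd{p}(x)$ becomes $\frac{1}{2\gc}\big(2(1-\tfrac12\va(p)) + (\va(p)-2+2\bq(p))\big)$; the $\va(p)$ terms cancel, leaving $\frac{1}{2\gc}\cdot 2\bq(p) = \frac{1}{\gc}\bq(p)$, which accounts for the first sum in the claimed formula. For the continuous part, the $\dd{K}$ contribution vanishes since $\dd{K}$ is supported on vertices, so the $dx$-coefficient is simply $\frac{1}{2\gc}\cdot 2 = \frac{1}{\gc}$ times the edge sum appearing in \propref{prop mucan}, which gives exactly $-\frac{1}{\gc}\sum_{e_i \in \ee{\ga}}\big(l_{\pp\qq} + l_{\pp\qq}^2(l_{\pp\pp}^+ - 2l_{\pp\qq}^+ + l_{\qq\qq}^+)\big)dx$.

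This is essentially a bookkeeping argument, so there is no serious obstacle; the only point requiring mild care is the cancellation of the valence terms in the Dirac coefficients and the observation that $\dd{K}(x)$ contributes nothing to the $dx$-part. One should also note in passing that $\ga$ is assumed to have an adequate vertex set (no self-loops or multiple edges), which is exactly the hypothesis under which \propref{prop mucan} was stated, and that \remref{rem valence} guarantees this assumption costs nothing. I would present the proof in a few lines as a direct substitution, citing \eqnref{eqn app2b} and \propref{prop mucan}.
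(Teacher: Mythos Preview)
Your proposal is correct and follows exactly the approach the paper indicates: the paper introduces the proposition with the sentence ``Using \eqnref{eqn app2b} and \propref{prop mucan}, we express $\mu_{ad}(x)$ in terms of the entries of $\lm$ and $\plm$,'' and gives no further argument. Your substitution and the cancellation of the valence terms in the Dirac part are precisely what that one-line hint unpacks to.
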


The next important observation is that $\tcg$ can be expressed in terms of $\lm$ and $\plm$:
\begin{theorem}\label{thm tcg}
Let $(\ga, \bq)$ be a pm-graph. Then
$$\tcg=2 (2 \gc-2)\sum_{p \in \, \vv{\ga}} (\va(p)-2+2 \bq (p)) l_{p p}^+
-2\sum_{p, \, q \in \, \vv{\ga}}(\va(p)+2 \bq (p))(\va(q)+2 \bq (q)) l_{p q}^+.$$
\end{theorem}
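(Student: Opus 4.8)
The plan is to start from the definition of $\tcg$ in \eqnref{eqn tcg}, namely
$\tcg=\sum_{p,q\in\vv{\ga}}(\va(p)-2+2\bq(p))(\va(q)-2+2\bq(q))\,r(p,q)$,
and substitute the expression for the resistance in terms of the pseudo inverse given by \lemref{lem disc2}, i.e. $r(p,q)=l_{pp}^+-2l_{pq}^++l_{qq}^+$. This splits the double sum into three pieces: one with $l_{pp}^+$, one with $l_{qq}^+$ (equal by symmetry in $p\leftrightarrow q$), and one with $-2l_{pq}^+$. For the $l_{pp}^+$ piece, the sum over $q$ factors out and equals $\sum_{q\in\vv{\ga}}(\va(q)-2+2\bq(q))=\deg K=2\gc-2$ by \eqnref{eqn app2a} and \eqnref{eqn genus}. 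Hence the first two pieces together contribute $2(2\gc-2)\sum_{p\in\vv{\ga}}(\va(p)-2+2\bq(p))\,l_{pp}^+$, which is exactly the first term in the claimed formula.

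For the remaining cross term $-2\sum_{p,q}(\va(p)-2+2\bq(p))(\va(q)-2+2\bq(q))\,l_{pq}^+$, I would expand the product $(\va(p)-2+2\bq(p))(\va(q)-2+2\bq(q))$ and use the double-centering property of the pseudo inverse recorded in \remref{rem doubcent1}: for each fixed $q$, $\sum_{p\in\vv{\ga}}l_{pq}^+=0$, and symmetrically $\sum_{q\in\vv{\ga}}l_{pq}^+=0$. Writing $\va(p)-2+2\bq(p)=(\va(p)+2\bq(p))-2$, every term in the expansion that carries a standalone $-2$ factor (from either the $p$-bracket or the $q$-bracket) produces a sum of the form $\sum_p l_{pq}^+$ or $\sum_q l_{pq}^+$ against the rest, which vanishes by \remref{rem doubcent1}; the constant $(-2)(-2)=4$ term likewise sums $l_{pq}^+$ over all $p$ and $q$ and dies. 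Only the product $(\va(p)+2\bq(p))(\va(q)+2\bq(q))$ survives, giving $-2\sum_{p,q}(\va(p)+2\bq(p))(\va(q)+2\bq(q))\,l_{pq}^+$, which is precisely the second term of the statement.

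Adding the two surviving contributions yields the asserted identity. The only real bookkeeping obstacle is making sure the vanishing-by-centering argument is applied to every mixed term correctly and that the sign and the coefficient $2(2\gc-2)$ on the diagonal term come out right after combining the two symmetric pieces; once \lemref{lem disc2}, \remref{rem doubcent1}, and the degree computation $\deg K=2\gc-2$ are in hand, the proof is a short, purely algebraic manipulation with no genuine difficulty. I would present it as: substitute \lemref{lem disc2}, split into three sums, evaluate the outer factor as $2\gc-2$ on the two diagonal sums, and kill the spurious terms in the cross sum via \remref{rem doubcent1}.
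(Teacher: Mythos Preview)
Your proposal is correct and follows exactly the same approach as the paper's proof, which simply cites \eqnref{eqn tcg}, \lemref{lem disc2}, \remref{rem doubcent1}, and the identity $\sum_s(\va(s)-2+2\bq(s))=\deg K=2\gc-2$. You have just spelled out the algebraic details that the paper leaves implicit.
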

\begin{proof}
Since $\deg(K)  =  2 \gc-2$, $\sum_{s \in \, \vv{\ga}} (\va(s)-2+2 \bq (s))=2 \gc-2$. Then the result follows from
\eqnref{eqn tcg}, \lemref{lem disc2} and \remref{rem doubcent1}.
\end{proof}

Whenever $\bq =0$ on $\ga$, i.e. $\ga$ is a simple graph, we have $\gc=g=e-v+1$ and
\begin{equation}\label{eqn teta simple}
\tcg=2 (2 g-2)\sum_{p \in \, \vv{\ga}} (\va(p)-2) l_{p p}^+
-2\sum_{p, \, q \in \, \vv{\ga}}\va(p) \va(q) l_{p q}^+.
\end{equation}
Moreover, if $\ga$ is both simple and $r$-regular, we have $e=\frac{r}{2}v$, $\gc=\frac{r-2}{2}v+1$ and we have
\begin{equation}\label{eqn teta simple regular}
\tcg=2 v (r-2)^2 tr(\plm),
\end{equation}
since $\sum_{p \in \, \vv{\ga}} l_{p q}^+=0$.


Given a pm-graph $(\ga,\bq)$, we can give the steps of the algorithm to compute the invariants $\ed$, $\vg$, $\lag$ and $Z(\ga)$ as follows:

\vskip 0.1 in
\Small
%
Set $\lm$ be the discrete Laplacian matrix corresponding to $\ga$. (Choose an adequate vertex set so that there will be no self loops or no multiple edges. Alternatively, use \eqnref{eqn pmginv and self loop} for self loops.)

Set $v$ be the number of rows in $\lm$ (i.e., the number of vertices in $\ga$).

Set $e$ be the number of nonzero entries above the diagonal of $\lm$ (i.e., the number of edges in $\ga$).

Set $\elg$ be the negative of the sum of the reciprocals of nonzero entries above the diagonal of $\lm$ (i.e., the total length of $\ga$).

Compute $\gc$ by using \eqnref{eqn genus}.

Compute $\plm$, pseudo inverse of $\lm$, by using the formula given in \eqnref{eqn pseudo inverse}.

Compute $\tg$ by using \thmref{thm disc2}.

Define a function $\va$ whose value at $i$ is, $\va(i)$, the number of nonzero off diagonal entries in the $i$-th row of $\lm$ (i.e., $\va(i)$ is the valence of the vertex $i$).

Compute  $\tcg$ by using \thmref{thm tcg}.

Compute $\ed$, $\vg$, $\lag$ and $Z(\ga)$ by using \thmref{thm pmginv and tau}.

%
%
%
%
%
%
%
\normalsize
\vskip 0.1 in

Note that the most costly part of this algorithm is to obtain $\plm$ from $\lm$.

\section{Examples}\label{sec examples}

In this section, we give various examples illustrating the implementation of the algorithm described in the previous section.

A symbolic computation is exemplified as follows:

\textbf{Example 1:}

Let $\ga$ be a simple pm-graph as illustrated in \figref{fig completeg4}. In this case, we have
$\elg=\frac{1}{a}+\frac{1}{b}+\frac{1}{c}+\frac{1}{d}+\frac{1}{e}+\frac{1}{f}$, and the discrete Laplacian
matrix corresponding to $\ga$ is given as follows:
\[
\lm=\left[
\begin{array}{cccc}
 a+b+c & -a & -b & -c  \\
 -a & a+d+e & -d & -e  \\
 -b & -d & b+d+f & -f  \\
 -c & -e & -f & c+e+f
\end{array}
\right].
\]
\begin{figure}
\centering
\includegraphics[scale=0.5]{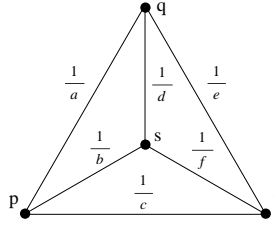} \caption{Complete graph with vertices $\{ p, q, s, t \}$ and edges of lengths $\{\frac{1}{a}, \frac{1}{b}, \frac{1}{c}, \frac{1}{d}, \frac{1}{e}, \frac{1}{f} \}$.} \label{fig completeg4}
\end{figure}
We first compute $\plm$ by using \eqnref{eqn pseudo inverse}, then use the algorithms given above to derive the following results:
\begin{align*}\label{eqn pminv of completeg4}
\tg &=\frac{1}{12}\elg-\frac{A+2 B}{6 C}, &  \tcg&=\frac{6A+8B}{C},
\\ \vg&=\frac{1}{9}\elg-\frac{2A+7B}{9 C},&  \lag&=\frac{3}{28}\elg+\frac{A}{28 C},
\\ Z(\ga)&=\frac{5}{108}\elg-\frac{A+8B}{108 C},&  \ed&=\frac{2}{9}\elg+\frac{5A+4B}{9 C},
\end{align*}
where $A=a b + a c + b c + a d + b d + a e + c e + d e +
 b f + c f + d f + e f$, $B=c d + b e + a f$, $C=a b c + a c d + b c d + a b e + b c e + a d e + b d e + c d e +
 a b f + a c f + a d f + b d f + c d f + a e f + b e f + c e f$.

The following example has mostly numeric computations and some symbolic computations because of non-zero $\bq$.

\textbf{Example 2:}

Let $\ga$ be a pm-graph $(\ga,\bq)$ such that $\ga$ is the complete graph on $4$ vertices $\{ p, q, s, t \}$ as in \figref{fig completeg4}. Suppose that each edge of $\ga$ has length $\frac{1}{6}$ and that $\bq (p)=\bq (q)=\bq (s)=\bq (t)=k$ for some nonnegative integer $k$. In this case, we have $\elg=1$, the following discrete Laplacian matrix and its pseudo inverse:
\[
\lm=\left[
\begin{array}{cccc}
 18 & -6 & -6 & -6  \\
 -6 & 18 & -6 & -6  \\
 -6 & -6 & 18 & -6  \\
 -6 & -6 & -6 & 18
\end{array}
\right], \quad
\plm=\left[
\begin{array}{cccc}
 \frac{1}{32} & \frac{-1}{96} & \frac{-1}{96} & \frac{-1}{96}   \medskip\\
 \frac{-1}{96} & \frac{1}{32} & \frac{-1}{96} & \frac{-1}{96}   \medskip\\
 \frac{-1}{96} & \frac{-1}{96} & \frac{1}{32} & \frac{-1}{96}   \medskip\\
 \frac{-1}{96} & \frac{-1}{96} & \frac{-1}{96} & \frac{1}{32}
\end{array}
\right].
\]
Then we obtain these results:
\begin{align*}\label{eqn pminv of completeg4}
\tg &=\frac{5}{96}, &  \tcg&=(1 + 2 k)^2, & \vg &=\frac{96 k^2+100 k+17}{96 (4 k+3)},
\\ \lag&=\frac{16 k^2+ 42 k+25}{32 ( 8 k+7)},&  Z(\ga)&=\frac{48 k^2+88k+37}{96(4k+3)^2}, & \ed &=\frac{(12k+11)(2k+1)}{12(4k+3)}.
\end{align*}

In particular, if $k=0$, $\tcg=1$, $\vg=\frac{17}{288}$, $\lag=\frac{25}{224}$, $Z(\ga)=\frac{37}{864}$ and $\ed=\frac{11}{36}$.

Sometimes we are given pm-graphs containing self loops or multiple edges. We exemplify our strategy for such cases in detail as follows:

\textbf{Example 3:}

In this example, we consider a pm-graph $(\ga,\bq)$ as the graph $I$ in \figref{fig ex3}, where the edge lengths and the values of $\bq$ are illustrated. In this case, $\ga$ has a self loop and two multiple edges. We first ignore the vertex with $\bq$ value $0$ and valence $2$. To avoid having multiple edges, we add a vertex to one of the multiple edges such that the added vertex has $\bq$ value $0$. We can deal with the self loop by either applying \eqnref{eqn pmginv and self loop} or considering two additional vertex with $\bq$ value $0$ on the self loop. These cases are illustrated by graphs $II$ and $III$ in \figref{fig ex3}. Thus, we can compute polarized metrized graph invariants of $\ga$ in two different ways, and the results are as follows:
\begin{align*}
\tg &=\frac{b + 2 c + 3 (a+A)}{12}, & \tcg &=\frac{480 b c}{b + 2 c}+170A,\\
\vg &=\frac{11a}{24}+\frac{11 b^2 + 764 b c +44c^2 }{72 (b + 2 c)}+\frac{9A}{2}, & \lag &=\frac{9a}{25}+\frac{3 (b^2 + 24 b c + 4 c^2)}{25 (b + 2 c)}+\frac{27A}{25},\\
\ed &=\frac{11a}{12}+\frac{11 b^2 + 764 b c + 44 c^2}{36 (b + 2 c)}+8A,& Z(\ga) &=\frac{23a}{576}+\frac{23 b^2 + 812 b c + 92 c^2}{1728 (b + 2 c)}+\frac{3A}{16},
\end{align*}
where $A=d+e$, $\elg=3a+b + 2 c + d + e$, $ g(\ga)=2$ and $\gc(\ga)=12$.


\begin{figure}
\centering
\includegraphics[scale=0.7]{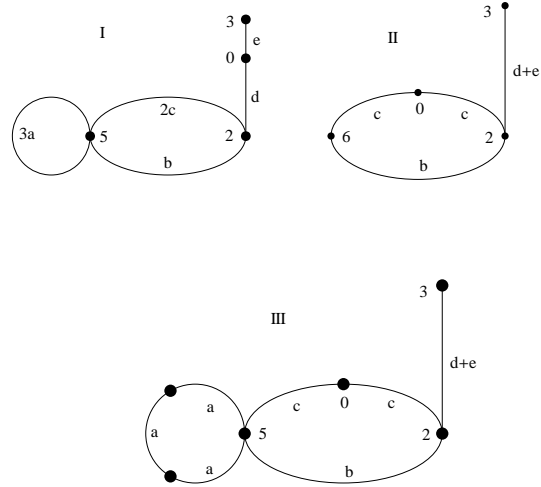} \caption{A pm-graph with a self loop, nonzero $\bq$ and two multiple edges.} \label{fig ex3}
\end{figure}

The following example is about the computation of invariants of a class of simple pm-graphs. It is given to show both symbolic computations and different aspects of numerical computations which would be critical during the implementation of our algorithm.

\textbf{Example 4:}

A ladder graph $L_n(a,b)$ is planar graph that looks like a ladder with $n$ rungs.
It has $2n$ vertices and $3n-2$ edges. Edges looking like rungs are of lengths $b$, and each of the remaining $2(n-1)$ edges has length $a$. Thus, $\ell(L_n(a,b))=2(n-1)a+n b$ and $g(\ga)=n-1$.
\figref{fig ladder graph} shows an example. Using the same notation for the corresponding simple pm-graph, we obtain the results given in the tables below:
\begin{figure}
\centering
\includegraphics[scale=0.35]{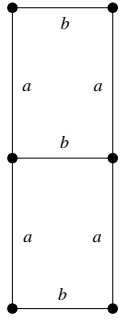} \caption{Ladder graph $L_3(a,b)$.} \label{fig ladder graph}
\end{figure}

\begin{table}
\begin{center}
\begin{tabular}{|c|c|c|c|c|}
  \hline
 $n$  & e & v & $g(L_n(a,b))$ &  $\ell(L_n(a,b))$\\
 \hline
 $2$ & 4 & 4 & 1 & $2 (a + b)$\\
 \hline
 $3$ & 7 & 6 & 2 & $4 a + 3 b$\\
 \hline
 $4$ & 10 & 8 & 3 & $2 (3 a + 2 b)$ \\
 \hline
 $5$ & 13 & 10 & 4 & $8 a + 5 b$\\
 \hline
\end{tabular}
\end{center} \caption{Number of edges, vertices, genus and length of $L_n(a,b)$ for $n \in \{ 2, \, 3, \, 4, \, 5 \}$.} \label{table length ladder}
\end{table}
We used Mathematica \cite{MMA} to do the symbolic computations given in Tables \ref{table tau ladder}, \ref{table phi ladder} and \ref{table epsilon ladder}. As the number of vertices gets larger the expressions in the results become more complicated, so we listed the results only for small values of $n$.
\begin{table}
\begin{center}
\begin{tabular}{|c|c|c|}
  \hline
 $n$ \T \B &  $\frac{\tg}{\elg}$ &  $\frac{\tcg}{\elg)}$ \\
 \hline
 $2$ \T \B &  $\frac{1}{12}$ & $0$ \\
 \hline
 $3$ \T \B & $\frac{8 a^2 + 14 a b + 7 b^2}{12 (2 a + 3 b) (4 a + 3 b)}$ & $\frac{2 b (2 a + b)}{(2 a + 3 b) (4 a + 3 b)}$ \\
 \hline
 $4$ \T \B & $\frac{12 a^4 + 36 a^3 b + 38 a^2 b^2 + 18 a b^3 +
 3 b^4}{24 (a + b) (3 a + 2 b) (2 a^2 + 4 a b + b^2)}$ & $\frac{4 a^4 + 20 a^3 b + 26 a^2 b^2 + 10 a b^3 + b^4}{(a + b) (3 a + 2 b) (2 a^2 + 4 a b + b^2)}$ \\
 \hline
 $5$ \T \B & $\frac{128 a^5 + 496 a^4 b + 704 a^3 b^2 + 476 a^2 b^3 + 160 a b^4 +
 19 b^5}{12 (8 a + 5 b) (4 a^2 + 6 a b + b^2) (4 a^2 + 10 a b +
   5 b^2)}$ & $\frac{2 (128 a^5 + 656 a^4 b + 1104 a^3 b^2 + 716 a^2 b^3 + 160 a b^4 +
   9 b^5)}{(8 a + 5 b) (4 a^2 + 6 a b + b^2) (4 a^2 + 10 a b + 5 b^2)}$ \\
 \hline
\end{tabular}
\end{center} \caption{For pm-graph $\ga=L_n(a,b)$, $\tg$ and $\tcg$ for $n \in \{ 2, \, 3, \, 4, \, 5 \}$.} \label{table tau ladder}
\end{table}
\begin{table}
\begin{center}
\begin{tabular}{|c|c|c|}
  \hline
 $n$ \T \B &  $\frac{\vg}{\elg}$ &  $\frac{\lag}{\elg}$ \\
 \hline
 $2$ \T \B &  $0$ & $\frac{1}{12}$\\
 \hline
 $3$ \T \B & $\frac{2 a^2 + 2 a b + b^2}{3 (2 a + 3 b) (4 a + 3 b)}$ & $\frac{1}{10}$\\
 \hline
 $4$ \T \B & $\frac{72 a^4 + 192 a^3 b + 164 a^2 b^2 + 60 a b^3 +
 9 b^4}{72 (a + b) (3 a + 2 b) (2 a^2 + 4 a b + b^2)}$ & $\frac{5 a + 3 b}{14 (3 a + 2 b)}$ \\
 \hline
 $5$ \T \B & $\frac{(2 a + b) (16 a^4 + 52 a^3 b + 51 a^2 b^2 + 16 a b^3 +
   2 b^4)}{(8 a + 5 b) (4 a^2 + 6 a b + b^2) (4 a^2 + 10 a b + 5 b^2)}$ & $\frac{5 (2 a + b)}{9 (8 a + 5 b)}$\\
 \hline
\end{tabular}
\end{center} \caption{For pm-graph $\ga=L_n(a,b)$, $\vg$ and $\lag$ for $n \in \{ 2, \, 3, \, 4, \, 5 \}$.} \label{table phi ladder}
\end{table}
\begin{table}
\begin{center}
\begin{tabular}{|c|c|c|}
  \hline
 $n$ \T \B &  $\frac{\ed}{\elg}$ &  $\frac{Z(\ga)}{\elg}$ \\
 \hline
 $2$ \T \B &  $0$ & $\frac{1}{12}$\\
 \hline
 $3$ \T \B & $\frac{4 a^2 + 10 a b + 5 b^2}{3 (2 a + 3 b) (4 a + 3 b)}$ & $\frac{(a + b)^2}{2 (2 a + 3 b) (4 a + 3 b)}$\\
 \hline
 $4$ \T \B & $\frac{36 a^4 + 132 a^3 b + 154 a^2 b^2 + 66 a b^3 +
 9 b^4}{18 (a + b) (3 a + 2 b) (2 a^2 + 4 a b + b^2)}$ & $\frac{36 a^4 + 120 a^3 b + 134 a^2 b^2 + 60 a b^3 +
 9 b^4}{108 (a + b) (3 a + 2 b) (2 a^2 + 4 a b + b^2)}$ \\
 \hline
 $5$ \T \B & $\frac{64 a^5 + 288 a^4 b + 452 a^3 b^2 + 298 a^2 b^3 + 80 a b^4 +
 7 b^5}{(8 a + 5 b) (4 a^2 + 6 a b + b^2) (4 a^2 + 10 a b + 5 b^2)}$ & $\frac{5 (2 a + b) (16 a^4 + 60 a^3 b + 73 a^2 b^2 + 32 a b^3 +
   4 b^4)}{24 (8 a + 5 b) (4 a^2 + 6 a b + b^2) (4 a^2 + 10 a b +
   5 b^2)}$\\
 \hline
\end{tabular}
\end{center} \caption{For pm-graph $\ga=L_n(a,b)$, $\ed$ and $Z(\ga)$ for $n \in \{ 2, \, 3, \, 4, \, 5 \}$.} \label{table epsilon ladder}
\end{table}
Again we used Mathematica \cite{MMA} to do exact computations given in \tabref{table exact ladder}.
\begin{table}
\begin{center}
\begin{tabular}{|c|c|c|c|c|c|c|}
  \hline
 $n$ \T \B & $\frac{\tg}{\elg}$ & $\frac{\tcg}{\elg}$ & $\frac{\vg}{\elg}$ &  $\frac{\lag}{\elg}$ & $\frac{\ed}{\elg}$ & $\frac{Z(\ga)}{\elg}$ \\
 \hline
 $5$ \T \B & $\frac{661}{10868}$ & $\frac{5546}{2717}$ & $\frac{411}{2717}$ & $\frac{5}{39}$ & $\frac{1189}{2717}$ & $\frac{925}{21736}$\\
 \hline
 $10$ \T \B & $\frac{2107}{37829}$ & $\frac{554308}{37829}$ & $\frac{30329}{71676}$ & $\frac{15}{76}$ & $\frac{344578}{340461}$ & $\frac{210215}{6128298}$\\
 \hline
 $15$ \T \B & $\frac{3061011619}{56529128700}$ & $\frac{180955287578}{4710760725}$ & $\frac{2384321993}{3411240525}$ & $\frac{665}{2494}$& $\frac{155613041207}{98925975225}$ & $\frac{2950668709}{92330910210}$\\
 \hline
 $20$ \T \B & $\frac{105284865781}{1971566979888}$ & $\frac{6020905705851}{82148624162}$ & $\frac{12183994532757}{12486590872624}$ & $\frac{380}{1131}$ & $\frac{6652614900537}{3121647718156}$ & $\frac{10979128575725}{355867839869784}$ \\
 \hline
\end{tabular}
\end{center} \caption{For pm-graph $\ga=L_n(1,1)$, computations of $\tg$, $\tcg$, $\vg$, $\lag$, $\ed$ and $Z(\ga)$ when $n \in \{ 5, \, 10, \, 15, \, 20 \}$.} \label{table exact ladder}
\end{table}

Finally, we used Matlab \cite{mat}, which does machine arithmetic, to obtain the results in \tabref{table inexact ladder}. Note that
when $n=25000$ in \tabref{table inexact ladder}, $L_n(1,1)$ has $50000$ vertices and $75000-2$ edges. As such computations would be possible on a computer with high memory and processing speed, we used Mac Pro with processor $2 \times 2.93$ GHz $6$-core Intel Xeon ($24$ hyper-threading in total) and memory $32$ GB $1333$ MHz DDR3 to obtain these results.
\begin{table}
\begin{center}
\begin{tabular}{|c|c|c|c|c|c|c|}
  \hline
 $n$ \T \B & $\frac{\tg}{\elg}$ & $\frac{\tcg}{\elg}$ & $\frac{\vg}{\elg}$ &  $\frac{\lag}{\elg}$ & $\frac{\ed}{\elg}$ & $\frac{Z(\ga)}{\elg}$ \\
 \hline
 $500$ \T \B & $0.05134130$ & $55155.801168$ & $27.6397$ & $7.00233$ & $55.4713$ & $0.0278941$\\
 \hline
 $1000$ \T \B & $0.05129978$ & $221422.325273$ & $55.4174$ & $13.9468$ & $111.027$ & $0.0278359$\\
 \hline
 $5000$ \T \B & $0.05126659$ & $5551550.777737$ & $277.639$ & $69.5023$ & $555.471$ & $0.0277894$\\
 \hline
 $10000$ \T \B & $0.05126245$ & $22214198.121181$ & $555.417$ & $138.947$ & $1111.03$ & $0.0277836$\\
 \hline
 $15000$ \T \B & $0.05126107$ & $49987945.991478$ & $833.194$ & $208.391$ & $1666.58$ & $0.0277816$\\
 \hline
 $20000$  \T \B & $0.05126038$ & $88872829.563811$ & $1110.97$ & $277.835$ & $2222.14$ & $0.0277807$\\
 \hline
$25000$  \T \B & $0.05125992$ & $138868444.360781$ & $1388.75$ & $347.279$ & $2777.69$ & $0.02778$\\
 \hline
\end{tabular}
\end{center} \caption{For pm-graph $\ga=L_n(1,1)$, computations of $\tg$, $\tcg$, $\vg$, $\lag$, $\ed$ and $Z(\ga)$ when $n \in \{ 500, \, 1000, \, 10000, \, 15000, \, 20000, \, 25000 \}$.} \label{table inexact ladder}
\end{table}

Symbolic computations are clearly the most costly computations. Among the numerical computations, we can have exact arithmetic (arithmetic with numbers having an infinite number of significant figures, i.e, with numbers having infinite precision), precision arithmetic (which involves the numbers with precision more than $18$) and arithmetic with machine numbers.


\textbf{Acknowledgements:} This work is supported by The Scientific and Technological Research Council of Turkey-TUBITAK (Project No: 110T686).


\end{document}